\theoremstyle{proclaim}
\newtheorem{theorem}{Theorem}[section]
\newtheorem{lemma}[theorem]{Lemma}
\newtheorem{corollary}[theorem]{Corollary}
\theoremstyle{fancyproclaim}
\theoremstyle{statement}
\newtheorem{remark}[theorem]{Remark}
\newtheorem{example}[theorem]{Example}
\theoremstyle{fancystatement}
\numberwithin{equation}{section}
\providecommand{\AMS}{$\mathcal{A}$\kern-.1667em%
	\lower.25em\hbox{$\mathcal{M}$}\kern-.125em$\mathcal{S}$}
\begin{document}
	\title[Characterization of $k-$smooth operators between Banach spaces ]{Characterization of $k-$smooth operators between Banach spaces}
	\author[Arpita Mal and Kallol Paul ]{Arpita Mal and Kallol Paul}

	\newcommand{\acr}{\newline\indent}

	\address[Mal]{Department of Mathematics\\ Jadavpur University\\ Kolkata 700032\\ West Bengal\\ INDIA}
	\email{arpitamalju@gmail.com}
	
	\address[Paul]{Department of Mathematics\\ Jadavpur University\\ Kolkata 700032\\ West Bengal\\ INDIA}
	\email{kalloldada@gmail.com}

	\thanks{The research of  Arpita Mal is supported by UGC, Govt. of India.  The research of Prof. Paul  is supported by project MATRICS(MTR/2017/000059)  of DST, Govt. of India. } 
	
	\subjclass[2010]{Primary 46B20, Secondary 47L05}
	\keywords{$k-$smoothness; linear operators; Banach space}

\maketitle
\begin{abstract}
 We study $k-$smoothness of bounded linear operators  defined between arbitrary  Banach spaces. As an application, we characterize $k-$smooth operators defined from $\ell_1^n$ to an arbitrary Banach space. We also completely characterize $k-$smooth operators defined between arbitrary two-dimensional Banach spaces.  
\end{abstract}

\section{Introduction}
The characterization of smoothness of operator between Banach spaces is a rich, intricate problem to study. It helps to understand the geometry of operator space. Over the years several mathematicians have been studying the smoothness of operators defined between Banach spaces. The readers may go through \cite{DeK,GY,HR,KY,MPRS,PSG,R,Ra,SPM,SPMR} for more results in this direction. Before proceeding further, we introduce the notations and terminologies to be used throughout the paper.

The letters $\mathbb{X},\mathbb{Y}$ denote real Banach spaces. The unit ball, unit sphere and the dual space of $\mathbb{X}$ are denoted respectively by $B_{\mathbb{X}}=\{x\in \mathbb{X}:\|x\|\leq 1\},S_\mathbb{X}=\{x\in \mathbb{X}:\|x\|\leq 1\}$ and $\mathbb{X}^*.$ The set of all extreme points of $B_\mathbb{X}$ is denoted by $Ext(B_{\mathbb{X}}).$ For any set $A,$ $|A|$ denotes the cardinality of $A.$ The space of all bounded (compact) linear operators is denoted by $\mathbb{L}(\mathbb{X},\mathbb{Y})~(\mathbb{K}(\mathbb{X},\mathbb{Y})).$  An element $x^*\in S_{\mathbb{X}^*}$ is  said to be a supporting linear functional of $x\in S_{\mathbb{X}},$ if $x^*(x)=1.$ Suppose $J(x)$ denotes the set of all supporting linear functionals of $x,$ i.e., $J(x)=\{x^*\in S_{\mathbb{X}^*}:x^*(x)=1\}.$ Note that, $J(x)$ is a weak*-compact convex subset of $S_{\mathbb{X}^*}.$ The set of all extreme points of $J(x)$ is denoted by $Ext~J(x).$ An element $x\in S_{\mathbb{X}}$ is said to be smooth if $J(x)$ is singleton. So an interesting problem is to study the ``size" of $J(x),$ whenever $J(x)$ is not singleton. In $2005$, Khalil and Saleh \cite{KS} have turned their attention to this problem. In  \cite{KS} they have generalized the notion of smoothness and introduced the notion of $k-$smoothness or multi-smoothness. Following \cite{KS}, we say that an element $x\in S_{\mathbb{X}}$ is $k-$smooth or the order of smoothness of $x$ is $k,$ if $J(x)$ contains exactly $k$ linearly independent vectors, i.e., if $k=dim ~span ~J(x).$ Similarly, an operator $T\in \mathbb{L}(\mathbb{X},\mathbb{Y})$ is said to be $k-$smooth operator if $k=dim~span~J(T),$ i.e., if there exist exactly $k$ linearly independent functionals in $S_{\mathbb{L}(\mathbb{X},\mathbb{Y})^*}$ supporting the operator $T.$ In \cite{H,Ha,KS,LR,Wa}, the authors have extensively studied $k-$smoothness in Banach spaces and in operator spaces. Though the characterization of $k-$smooth operators defined on Hilbert spaces \cite{Wa} and between some particular Banach spaces are known, the complete characterization between arbitrary Banach spaces is still open. The main purpose of this paper is to proceed substantially in this direction. To do so we will use norm attainment set of an operator defined as :  For $T\in \mathbb{L}(\mathbb{X},\mathbb{Y}),$ the norm attainment set, denoted as $M_T$, is the collection of all unit vectors at which $T$ attains its norm, i.e.,  $M_T=\{x\in S_{\mathbb{X}}:\|Tx\|=\|T\|\}.$ To look into the properties of norm attainment set and its role in the study of smoothness of operators one may go through \cite{MPRS,PSG,S,SPMR}.

In this paper, we first characterize the order of smoothness of some class of operators defined between a finite dimensional Banach space and an arbitrary Banach space depending on the norm attainment sets of the operators. As a result, we can completely characterize $k-$smooth operators defined between $\ell_1^n$ and an arbitrary Banach space. Finally, we characterize the order of smoothness of $T\in \mathbb{L}(\mathbb{X},\mathbb{Y}),$ where $\mathbb{X},\mathbb{Y}$ are arbitrary two-dimensional Banach spaces. To obtain these results, we mainly use the following  lemma from \cite[Lemma 3.1]{W}, which characterizes $Ext~J(T)$ in terms of $Ext~J(Tx)$ and $M_T\cap Ext(B_{\mathbb{X}})\ni x$. 
\begin{lemma}\cite[Lemma 3.1]{W}\label{lemma-wojcik}
	Suppose that $\mathbb{X}$ is a reflexive Banach space. Suppose that $\mathbb{K}(\mathbb{X},\mathbb{Y})$ is an $M-$ideal in $\mathbb{L}(\mathbb{X},\mathbb{Y}).$ Let $T\in \mathbb{L}(\mathbb{X},\mathbb{Y}), \|T\|=1$ and  dist$(T,\mathbb{K}(\mathbb{X},\mathbb{Y}))<1.$  Then $M_T\cap Ext(B_\mathbb{X})\neq \emptyset$ and 
	\[Ext ~J(T)=\{y^*\otimes x\in \mathbb{K}(\mathbb{X},\mathbb{Y})^*:x\in M_T\cap Ext(B_{\mathbb{X}}), y^*\in Ext ~J(Tx)\},\]
	where  $y^*\otimes x: \mathbb{K}(\mathbb{X},\mathbb{Y})\to \mathbb{R}$ is defined by $y^*\otimes x(S)=y^*(Sx)$ for every $S\in \mathbb{K}(\mathbb{X},\mathbb{Y}).$
\end{lemma}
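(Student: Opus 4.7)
The plan is to exploit the $M$-ideal hypothesis to localize $J(T)$ inside $\mathbb{K}(\mathbb{X},\mathbb{Y})^*$ and then invoke the description of extreme points of the unit ball of $\mathbb{K}(\mathbb{X},\mathbb{Y})^*$ as elementary tensors $y^*\otimes x$. The first task is the localization. The $M$-ideal hypothesis gives an $L$-summand decomposition $\mathbb{L}(\mathbb{X},\mathbb{Y})^*=\mathbb{K}(\mathbb{X},\mathbb{Y})^*\oplus_1\mathbb{K}(\mathbb{X},\mathbb{Y})^{\perp}$. Given $\phi\in J(T)$, I would write $\phi=\phi_1+\phi_2$ with $\|\phi\|=\|\phi_1\|+\|\phi_2\|=1$, pick $K\in \mathbb{K}(\mathbb{X},\mathbb{Y})$ with $\|T-K\|=d:=\mathrm{dist}(T,\mathbb{K}(\mathbb{X},\mathbb{Y}))<1$, and bound $\phi_2(T)=\phi_2(T-K)\leq d\|\phi_2\|$. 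Combined with $1=\phi(T)=\phi_1(T)+\phi_2(T)\leq \|\phi_1\|+d\|\phi_2\|\leq \|\phi_1\|+\|\phi_2\|=1$, this forces $\phi_2=0$, so $J(T)$ embeds isometrically into $\mathbb{K}(\mathbb{X},\mathbb{Y})^*$.

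Next, I would identify the extreme points of $B_{\mathbb{K}(\mathbb{X},\mathbb{Y})^*}$. Because $\mathbb{X}$ is reflexive, $B_{\mathbb{X}}$ is weakly compact and $B_{\mathbb{Y}^*}$ is weak*-compact, and for each $S\in \mathbb{K}(\mathbb{X},\mathbb{Y})$ the evaluation $(x,y^*)\mapsto y^*(Sx)$ is continuous on the compact product $B_{\mathbb{X}}\times B_{\mathbb{Y}^*}$ (here compactness of $S$ is essential to pass from weak to norm continuity in the first variable). This realizes $\mathbb{K}(\mathbb{X},\mathbb{Y})$ isometrically as a subspace of the function space $C(B_{\mathbb{X}}\times B_{\mathbb{Y}^*})$. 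A standard extremal argument (Hahn--Banach extension into $C(K)^*$, the point-evaluation description of extreme functionals there, and Milman's converse to Krein--Milman to force both coordinates to be extreme) then identifies every $\phi\in \mathrm{Ext}(B_{\mathbb{K}(\mathbb{X},\mathbb{Y})^*})$ as a tensor $y^*\otimes x$ with $x\in \mathrm{Ext}(B_{\mathbb{X}})$ and $y^*\in \mathrm{Ext}(B_{\mathbb{Y}^*})$.

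Combining the two steps, an extreme point of $J(T)$ is a $y^*\otimes x$ as above that in addition satisfies $y^*(Tx)=1$. This last condition is equivalent to $\|Tx\|=1$, i.e.\ $x\in M_T$, together with $y^*\in J(Tx)$. Since $J(Tx)$ is a weak*-exposed face of $B_{\mathbb{Y}^*}$, one has $\mathrm{Ext}(J(Tx))=J(Tx)\cap \mathrm{Ext}(B_{\mathbb{Y}^*})$, which yields the displayed formula. Finally $J(T)$ is non-empty by Hahn--Banach and weak*-compact convex, so Krein--Milman supplies an extreme point, and via the description above this produces an $x\in M_T\cap \mathrm{Ext}(B_{\mathbb{X}})$, proving the non-emptiness assertion.

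The hard part is the second step: for an arbitrary subspace of a $C(K)$-space, an extreme functional need not admit an extreme Hahn--Banach extension, so the rank-one description of $\mathrm{Ext}(B_{\mathbb{K}(\mathbb{X},\mathbb{Y})^*})$ is where the work hides. One either appeals to the Ruess--Stegall integral representation of $\mathbb{K}(\mathbb{X},\mathbb{Y})^*$ (which uses reflexivity of $\mathbb{X}$ in an essential way) or to the results of Lima and collaborators on extreme points of dual balls of compact operator spaces. Once that description is secured, the $M$-ideal localization and the identification of $J(Tx)$ are routine.
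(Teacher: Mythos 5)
This lemma is not proved in the paper at all: it is imported verbatim from W\'ojcik \cite[Lemma 3.1]{W} and used as a black box, so there is no in-paper argument to compare against. Your reconstruction follows what is essentially the standard (and, in substance, the cited source's) route, and it is sound: the $\ell_1$-decomposition $\mathbb{L}(\mathbb{X},\mathbb{Y})^*=\mathbb{K}(\mathbb{X},\mathbb{Y})^*\oplus_1\mathbb{K}(\mathbb{X},\mathbb{Y})^{\perp}$ together with dist$(T,\mathbb{K}(\mathbb{X},\mathbb{Y}))<1$ kills the singular part of any $\phi\in J(T)$, the Ruess--Stegall theorem identifies $Ext(B_{\mathbb{K}(\mathbb{X},\mathbb{Y})^*})$ with $\{y^*\otimes x^{**}\}$ (reflexivity turning $x^{**}$ into $x\in Ext(B_{\mathbb{X}})$), and then $y^*(Tx)=1$ unpacks into $x\in M_T$, $y^*\in J(Tx)$, with Krein--Milman giving the non-emptiness claim. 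Three small points deserve tightening. First, the distance need not be attained, so do not ``pick $K$ with $\|T-K\|=d$''; take $K$ with $\|T-K\|<1$ (or let $\|T-K_n\|\to d$), which is all the estimate needs. Second, your $C(B_{\mathbb{X}}\times B_{\mathbb{Y}^*})$-embedding sketch is, as you yourself note, not a proof (extreme functionals on a subspace need not have extreme extensions); the honest move is exactly the one you fall back on, namely citing Ruess--Stegall (or Lima), which holds for arbitrary $\mathbb{X},\mathbb{Y}$. Third, make the facial bookkeeping explicit: $J(T)$ sits inside the copy of $B_{\mathbb{K}(\mathbb{X},\mathbb{Y})^*}$ as a face (testing convex combinations against $T$), which is what lets you pass between $Ext~J(T)$ and $J(T)\cap Ext(B_{\mathbb{K}(\mathbb{X},\mathbb{Y})^*})$, and likewise $Ext~J(Tx)=J(Tx)\cap Ext(B_{\mathbb{Y}^*})$; here one also uses that, $\mathbb{K}(\mathbb{X},\mathbb{Y})$ being an $M$-ideal, each $y^*\otimes x$ has a unique norm-preserving extension to $\mathbb{L}(\mathbb{X},\mathbb{Y})$, namely $S\mapsto y^*(Sx)$, so evaluating at $T$ is unambiguous. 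With these details filled in, your argument is a correct proof of the quoted lemma.
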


\section{Main results}
We begin this section with an easy Lemma which will be used later to prove some of the theorems of this section. The proof of the lemma being simple, we omit the proof here.
\begin{lemma}\label{lemma-01}
Suppose  $\mathbb{X},\mathbb{Y}$ are finite dimensional Banach spaces. If $\{x_1,x_2,\ldots,x_m\}$ is a linearly independent subset of $\mathbb{X}$ and $\{y_1^*,y_2^*,\ldots,y_n^*\}$  is a linearly independent subset of $\mathbb{Y}^*$ then $\{y_i^*\otimes x_j:1\leq i\leq n,1\leq j\leq m\}$ is a linearly independent subset of $\mathbb{L}(\mathbb{X},\mathbb{Y})^*.$  	
\end{lemma}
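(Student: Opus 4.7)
The natural plan is to test the hypothetical linear relation against a rich enough family of rank-one operators. Suppose
\[
\sum_{i=1}^{n}\sum_{j=1}^{m}\alpha_{ij}\, y_i^*\otimes x_j = 0
\]
as a functional on $\mathbb{L}(\mathbb{X},\mathbb{Y})$. For each $f\in\mathbb{X}^*$ and $y\in\mathbb{Y}$, consider the rank-one operator $S_{f,y}\in\mathbb{L}(\mathbb{X},\mathbb{Y})$ defined by $S_{f,y}(x)=f(x)\,y$. Then by the definition of $y_i^*\otimes x_j$ one computes
\[
(y_i^*\otimes x_j)(S_{f,y}) = y_i^*(S_{f,y}(x_j)) = f(x_j)\, y_i^*(y),
\]
so the assumed relation becomes the scalar identity
\[
\sum_{i,j}\alpha_{ij}\, f(x_j)\, y_i^*(y) = 0 \qquad \text{for every } f\in\mathbb{X}^*,\ y\in\mathbb{Y}.
\]

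Next I would separate the $x$-side from the $y$-side. Fixing $y$ and grouping by $j$,
\[
\sum_{j=1}^{m}\Bigl(\sum_{i=1}^{n}\alpha_{ij}\, y_i^*(y)\Bigr) f(x_j) = 0 \qquad \text{for every } f\in\mathbb{X}^*.
\]
Since $\{x_1,\ldots,x_m\}$ is linearly independent in the finite dimensional space $\mathbb{X}$, Hahn--Banach (or simply extending to a basis and taking the dual basis) yields functionals $f_k\in\mathbb{X}^*$ with $f_k(x_j)=\delta_{kj}$. Plugging in $f=f_k$ gives $\sum_{i}\alpha_{ik}\, y_i^*(y)=0$ for every $y\in\mathbb{Y}$ and every $k$. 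By the linear independence of $\{y_1^*,\ldots,y_n^*\}$ in $\mathbb{Y}^*$, this forces $\alpha_{ik}=0$ for all $i,k$, which is exactly what we need.

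There is essentially no obstacle here; the only subtlety is to test against a large enough family of operators, and rank-one operators $f\otimes y$ are more than sufficient because they separate the contributions of the $x_j$'s and of the $y_i^*$'s into a clean tensor form. The finite dimensionality assumption is used only to guarantee the existence of the separating functionals $f_k$, but since $\{x_1,\ldots,x_m\}$ is a finite linearly independent set the same conclusion would hold in general Banach spaces by Hahn--Banach; hence the hypothesis is mainly for conceptual convenience in the intended applications.
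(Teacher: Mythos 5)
Your proof is correct, and it is essentially the argument the paper has in mind when it omits the proof as ``simple'': testing the relation against rank-one operators $S_{f,y}$ and separating variables via functionals $f_k$ with $f_k(x_j)=\delta_{kj}$ is exactly the device the authors use later (the operators $S_\beta$ in the proofs of Theorems \ref{th-ind} and \ref{th-ind2} annihilate all but one of the $x_j$ in the same way). Your closing remark that finite dimensionality is inessential, since Hahn--Banach supplies the $f_k$ for any finite linearly independent set, is also accurate.
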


Observe that, if $\mathbb{X}$ is a finite dimensional Banach space, $\mathbb{Y}$  is arbitrary Banach space and if $T\in \mathbb{L}(\mathbb{X},\mathbb{Y})$ $(=\mathbb{K}(\mathbb{X},\mathbb{Y}))$ is such that $\|T\|=1$ holds, then $\mathbb{X},\mathbb{Y}$ and $T$ satisfies all the conditions of Lemma \ref{lemma-wojcik}. Using Lemma \ref{lemma-wojcik}, we now characterize the order of smoothness of a class of operators defined between a finite dimensional Banach space and an arbitrary Banach space.

\begin{theorem}\label{th-ind}
	Suppose $\mathbb{X}$ is a finite dimensional Banach space and $\mathbb{Y}$ is arbitrary Banach space. Suppose that $T\in \mathbb{L}(\mathbb{X},\mathbb{Y})$ is such that $\|T\|=1$ and  $M_T\cap Ext(B_{\mathbb{X}})=\{\pm x_1,\pm x_2,\ldots,\pm x_r\},$ where $\{x_1,x_2,\ldots,x_r\}$ is linearly independent in $\mathbb{X}.$ Then $T$ is $k-$smooth if and only if $Tx_i$ is $m_i-$smooth for each $1\leq i\leq r$ and $m_1+m_2+\ldots+m_r=k.$
\end{theorem}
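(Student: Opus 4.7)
The plan is to apply Lemma \ref{lemma-wojcik} to describe $\operatorname{Ext} J(T)$ explicitly, collapse the contributions from $\pm x_i$, and then show the resulting finite-dimensional span decomposes as a direct sum indexed by $i$. Since $\mathbb{X}$ is finite-dimensional, $\mathbb{L}(\mathbb{X},\mathbb{Y})=\mathbb{K}(\mathbb{X},\mathbb{Y})$ and the hypotheses of Lemma \ref{lemma-wojcik} are satisfied, so
\[
\operatorname{Ext} J(T)=\bigcup_{i=1}^{r}\Bigl(\{y^{*}\otimes x_i : y^{*}\in \operatorname{Ext} J(Tx_i)\}\cup\{y^{*}\otimes(-x_i) : y^{*}\in \operatorname{Ext} J(-Tx_i)\}\Bigr).
\]

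First I would verify that the $\pm x_i$ contributions coincide. The identity $J(-u)=-J(u)$ (hence $\operatorname{Ext} J(-u)=-\operatorname{Ext} J(u)$) together with the computation $y^{*}\otimes(-x_i)(S)=-y^{*}(Sx_i)=(-y^{*})\otimes x_i(S)$ shows that the set for $-x_i$ equals $\{z^{*}\otimes x_i : z^{*}\in \operatorname{Ext} J(Tx_i)\}$, the same as the set for $x_i$. Hence
\[
\operatorname{Ext} J(T)=\bigcup_{i=1}^{r}\{y^{*}\otimes x_i : y^{*}\in \operatorname{Ext} J(Tx_i)\}.
\]
By Krein--Milman, $J(T)$ is the weak$^{*}$-closed convex hull of $\operatorname{Ext} J(T)$, so $\operatorname{span} J(T)\subseteq \overline{\operatorname{span}}^{w^{*}}\operatorname{Ext} J(T)$; because a finite-dimensional subspace is always weak$^{*}$-closed, the two spans agree once either is finite-dimensional. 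Thus $\dim\operatorname{span} J(T)=\dim\operatorname{span}\operatorname{Ext} J(T)$, and it suffices to compute the latter.

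Next, for each fixed $i$, the map $y^{*}\mapsto y^{*}\otimes x_i$ from $\mathbb{Y}^{*}$ into $\mathbb{L}(\mathbb{X},\mathbb{Y})^{*}$ is linear and injective (inject because $x_i\neq 0$ and one can realize any value $Sx_i=v\in\mathbb{Y}$), so
\[
\dim\operatorname{span}\{y^{*}\otimes x_i : y^{*}\in \operatorname{Ext} J(Tx_i)\}=\dim\operatorname{span} J(Tx_i)=m_i
\]
(with the convention that this is infinite when $Tx_i$ has no finite order of smoothness). The core step is to show that these $r$ subspaces of $\mathbb{L}(\mathbb{X},\mathbb{Y})^{*}$ are in direct sum; this is exactly the generalization of Lemma \ref{lemma-01} needed here. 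For this, assume $\sum_{i=1}^{r}\sum_{j=1}^{m_i}c_{ij}\,y^{*}_{ij}\otimes x_i=0$ where $\{y^{*}_{ij}\}_{j=1}^{m_i}$ is a basis of $\operatorname{span} J(Tx_i)$. Since $\{x_1,\dots,x_r\}$ is linearly independent in the finite-dimensional space $\mathbb{X}$, I can extend it to a basis of $\mathbb{X}$ and, for any prescribed $v_1,\dots,v_r\in\mathbb{Y}$, build $S\in\mathbb{L}(\mathbb{X},\mathbb{Y})$ with $Sx_i=v_i$. Evaluating the relation at such $S$ yields $\sum_{i}\sum_{j}c_{ij}y^{*}_{ij}(v_i)=0$ for all choices of $v_i$, hence $\sum_{j}c_{ij}y^{*}_{ij}=0$ in $\mathbb{Y}^{*}$ for each $i$, and the linear independence of the $y^{*}_{ij}$ forces $c_{ij}=0$.

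Combining these facts gives $\dim\operatorname{span} J(T)=\sum_{i=1}^{r} m_i$. The equivalence in the theorem follows immediately: if each $Tx_i$ is $m_i$-smooth with $\sum m_i=k$, then $T$ is $k$-smooth; conversely, if $T$ is $k$-smooth, the identity forces each $m_i$ to be finite (else the sum would be infinite) and $\sum_i m_i=k$. The only genuine obstacle is the linear-independence step, which is really a direct-sum argument made available by the interpolation property that finite-dimensionality of $\mathbb{X}$ provides; everything else is bookkeeping around Lemma \ref{lemma-wojcik} and Krein--Milman.
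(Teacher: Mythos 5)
Your proposal is correct and follows essentially the same route as the paper: invoke Lemma \ref{lemma-wojcik} (valid since $\mathbb{L}(\mathbb{X},\mathbb{Y})=\mathbb{K}(\mathbb{X},\mathbb{Y})$), identify $\operatorname{span}\,\operatorname{Ext}J(T)$ with a sum of subspaces $W_i$ of dimension $m_i$, and prove the sum is direct by constructing operators with prescribed values at the linearly independent $x_i$'s. Your extra touches (collapsing the $\pm x_i$ contributions explicitly, using Krein--Milman in place of the citation to \cite[Prop.~2.1]{LR}, and noting the finiteness bookkeeping in the ``if and only if'') are only minor variations on the paper's argument.
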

\begin{proof}
	Let $dim(\mathbb{X})=n.$ At first suppose that $r<n.$ Extend $\{x_1,x_2,\ldots,x_r\}$ to a basis $\{x_1,x_2,\ldots,x_n\}$ of $\mathbb{X}.$ Suppose $T$ is $k-$smooth and $Tx_i$ is $m_i-$smooth for each $1\leq i\leq r.$  Then by \cite[Prop. 2.1]{LR}, for each $1\leq i\leq r,$ we have, 
		\begin{eqnarray*}
		m_i&=&dim ~span ~J(Tx_i)\\
		&=& dim~ span~ Ext~ J(Tx_i).
	\end{eqnarray*}

    Let $\{y_{ij}^*:1\leq j\leq m_i, y_{ij}^*\in Ext ~J(Tx_i)\}$ be a basis of $span~Ext~J(Tx_i)$ for each $1\leq i\leq r.$  Let 
     \[W_i=span ~\{y_{ij}^*\otimes x_i:y_{ij}^*\in Ext ~J(Tx_i)\} ~\text{for each } 1\leq i\leq r.\]
     We first show that $B_i=\{y_{ij}^*\otimes x_i:1\leq j\leq m_i\}$ is a basis of $W_i.$ 
     Let $\sum\limits_{1\leq j\leq m_i}a_j(y_{ij}^*\otimes x_i)=0,$ where $a_j\in \mathbb{R}$ for all $1\leq j\leq m_i.$ Consider a Hamel basis $\{u_\beta:\beta \in \Lambda\}$ of $\mathbb{Y}.$ For each $\beta\in\Lambda,$ define $S_\beta\in \mathbb{L}(\mathbb{X},\mathbb{Y})$ by 
     \begin{equation} \label{eq1}
     \begin{split}
    	S_\beta x_i&=u_\beta\\
    S_\beta x_l&= 0 \text{~~ for all } 1\leq l(\neq i)\leq n. 
    \end{split}
     \end{equation}
    Then for each $\beta \in \Lambda,$\\
    $\sum\limits_{1\leq j\leq m_i}a_j(y_{ij}^*\otimes x_i)(S_\beta)=0\Rightarrow\sum\limits_{1\leq j\leq m_i}a_jy_{ij}^*S_\beta(x_i)=0\Rightarrow \sum\limits_{1\leq j\leq m_i}a_jy_{ij}^*(u_\beta)=0\Rightarrow \sum\limits_{1\leq j\leq m_i}a_jy_{ij}^*=0\Rightarrow a_j=0 $ for all $1\leq j\leq m_i.$ Thus, $B_i$ is linearly independent. It can be easily verified that $B_i$ is a spanning set of $W_i.$ Hence, $B_i$ is a basis of $W_i$  and so $dim~W_i=m_i$ for each $1\leq i\leq r.$ Now,
	\begin{eqnarray*}
	k&=&dim ~span ~J(T)\\
	&=& dim~ span~ Ext ~J(T)\\
	&=& dim~ span ~\{y_{ij}^*\otimes x_i:y_{ij}^*\in Ext ~J(Tx_i),1\leq i\leq r \}\\
	&=& dim~W,\text{~where,}\\
	W&=&span ~\{y_{ij}^*\otimes x_i:y_{ij}^*\in Ext~ J(Tx_i),1\leq i\leq r \}.
	\end{eqnarray*}
    We now show that $W=\oplus_{i=1}^{r}W_i.$ Clearly, $W=W_1+W_2+\ldots+W_r.$ Suppose that $z\in W_i\cap \sum\limits_{\substack{l=1\\l\neq i}}^{r}W_l$ for some $i,$ $1\leq i\leq r.$ Then \[z=\sum_{j=1}^{m_i}a_{ij}(y_{ij}^*\otimes x_i)=\sum\limits_{\substack{1\leq l(\neq i)\leq r}}w_l,\text{ where }w_l=\sum\limits_{\substack{1\leq j\leq m_l}}a_{lj}(y_{lj}^*\otimes x_l)\in W_l~,a_{ij}\in \mathbb{R}.\]
    For each $\beta\in\Lambda,$ considering $S_\beta\in \mathbb{L}(\mathbb{X},\mathbb{Y}),$ as defined in (\ref{eq1}), we have, \\ $\sum_{j=1}^{m_i}a_{ij}y_{ij}^*S_{\beta}(x_i)=\sum\limits_{\substack{1\leq l(\neq i)\leq r \\ 1\leq j\leq m_l}}a_{lj}y_{lj}^*S_\beta(x_l)\Rightarrow \sum_{j=1}^{m_i}a_{ij}y_{ij}^*(u_\beta)=0 \Rightarrow a_{ij}=0$ for all $1\leq j \leq m_i.$ Thus, $z=0\Rightarrow W_i\cap \sum\limits_{\substack{l=1\\l\neq i}}^{r}W_l=\{0\}.$ Therefore, $W=\oplus_{i=1}^{r}W_i.$ Hence, $k=dim~W=dim~\oplus_{i=1}^{r}W_i=\oplus_{i=1}^{r}~dim~W_i=m_1+m_2+\ldots+m_r.$ \\
     If $r=n,$ then proceeding similarly, we can show that $k=m_1+m_2+\ldots+m_r.$ This completes the proof of the theorem.
\end{proof}

Using Theorem \ref{th-ind}, we can completely characterize the order of smoothness of a linear operator defined from $\ell_1^n~(n\in \mathbb{R})$ to an arbitrary Banach space.

\begin{corollary}
	Let $\mathbb{Y}$ be an arbitrary Banach space and $T\in \mathbb{L}(\ell_1^n,\mathbb{Y}),\|T\|=1.$ Then $T$ is $k-$smooth if and only if $M_T\cap Ext(B_{\ell_1^n})=\{\pm x_1,\pm x_2,$ $\ldots,\pm x_r\}$ for some $1\leq r\leq n,$ $Tx_i$ is $m_i-$smooth for each $1\leq i\leq r$ and $m_1+m_2+\ldots+m_r=k.$
\end{corollary}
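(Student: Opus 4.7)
The plan is to derive the corollary as a direct application of Theorem \ref{th-ind}, with the only substantive observation being the characterization of the extreme points of $B_{\ell_1^n}$. Specifically, it is well-known that $Ext(B_{\ell_1^n}) = \{\pm e_1, \pm e_2, \ldots, \pm e_n\}$, where $\{e_1, \ldots, e_n\}$ denotes the standard basis of $\ell_1^n$. Consequently, \emph{any} subset of $Ext(B_{\ell_1^n})$ obtained as a union of antipodal pairs $\{\pm e_{j_1}, \ldots, \pm e_{j_r}\}$ automatically consists of linearly independent vectors (up to sign), since it is a subset of a basis.

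For the forward direction, assume $T$ is $k$-smooth. Since $\ell_1^n$ is finite-dimensional (hence reflexive), $\mathbb{K}(\ell_1^n,\mathbb{Y}) = \mathbb{L}(\ell_1^n,\mathbb{Y})$, so Lemma \ref{lemma-wojcik} applies trivially, yielding $M_T \cap Ext(B_{\ell_1^n}) \neq \emptyset$. Moreover $M_T$ is symmetric ($x \in M_T \iff -x \in M_T$), so the intersection is a union of antipodal pairs, i.e.\ $M_T \cap Ext(B_{\ell_1^n}) = \{\pm x_1, \ldots, \pm x_r\}$ for some $1 \le r \le n$, where $\{x_1,\ldots,x_r\} \subseteq \{e_1,\ldots,e_n\}$. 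This family is linearly independent, so the hypotheses of Theorem \ref{th-ind} are met, and that theorem immediately gives that each $Tx_i$ is $m_i$-smooth with $m_1 + \cdots + m_r = k$.

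For the converse, assume the stated condition on $M_T \cap Ext(B_{\ell_1^n})$ holds. Then, again by the extreme-point structure of $B_{\ell_1^n}$, the vectors $x_1,\ldots, x_r$ are linearly independent. Applying the sufficiency part of Theorem \ref{th-ind}, we conclude that $T$ is $(m_1+\cdots+m_r)$-smooth, i.e.\ $k$-smooth.

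There is essentially no obstacle here; the only point worth double-checking is the identification of $Ext(B_{\ell_1^n})$ and the verification that the linear-independence hypothesis of Theorem \ref{th-ind} is automatic in this setting. One could shorten the exposition to a few lines, but it seems worthwhile to spell out why the $x_i$'s coming from $M_T \cap Ext(B_{\ell_1^n})$ are forced to be (signed) standard basis vectors, since this is what converts the general theorem into a clean and complete characterization specific to $\ell_1^n$.
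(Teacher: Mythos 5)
Your proposal is correct and follows essentially the same route as the paper: identify $Ext(B_{\ell_1^n})$ as the signed standard basis vectors, note that the nonempty, symmetric, finite set $M_T\cap Ext(B_{\ell_1^n})$ therefore consists of antipodal pairs of linearly independent vectors, and apply Theorem \ref{th-ind} in both directions. The paper's proof is just a compressed version of exactly this observation.
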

\begin{proof}
The proof easily follows from Theorem \ref{th-ind} and the fact that $B_{\ell_1^n}$ contains only finitely many extreme points and if $M_T\cap Ext(B_{\ell_1^n})=\{\pm x_1,\pm x_2,\ldots,\pm x_r\}$ for some $1\leq r\leq n,$ then $\{x_1,x_2,\ldots,x_r\}$ is always linearly independent set in $\ell_1^n.$
\end{proof}

\begin{remark}
	Note that, if we consider $T\in \mathbb{L}(\ell_\infty^3,\ell_\infty^3)$ defined by $T(x,y,z)=\frac{1}{2}(x+y,x+y,x+y),$ then $M_T\cap Ext(B_{\ell_\infty^3})=\{\pm(1,1,1),\pm(1,1,-1)\}$ and so in this case, we can apply Theorem \ref{th-ind} to conclude that $T$ is $6-$smooth. Whereas if we consider the operator  $T\in \mathbb{L}(\ell_\infty^3,\ell_\infty^3)$ defined by $T(x,y,z)=(x,0,0),$ then $M_T\cap Ext(B_{\ell_\infty^3})=\{\pm(1,1,1),\pm(1,1,-1),\pm(-1,1,1),\pm(1,-1,1)\}$ and so we cannot conclude $k-$smoothness of $T$ from  Theorem \ref{th-ind}.
\end{remark}
If the dimension of $\mathbb{X}$ is infinite then  the Theorem \ref{th-ind} may not be true. To obtain a desired result for infinite dimensional Banach space $\mathbb{X}$, apart from linear independency, we assume additional condition on   $M_T\cap Ext(B_{\mathbb{X}})=\{\pm x_1,\pm x_2,\ldots,\pm x_r\},$ in the form that  $ x_i \bot_B x_j, \forall i,j, i \neq j.$  Note that, in a Banach space $\mathbb{X},$ an element $x$ is Birkhoff-James \cite{B,J} orthogonal to an element $y$, written as,  $ x \bot_B y$  if and only if $ \| x + \lambda y\| \geq \|x\| $ for all scalars $\lambda.$ Although the proof of the following theorem is in the same spirit of the  Theorem \ref{th-ind}, except for  the construction of $S_\beta$, we prove it in details for the convenience of the reader.

\begin{theorem}\label{th-ind2}
	Suppose $\mathbb{X}$ is a smooth, reflexive Banach space and $\mathbb{Y}$ is arbitrary Banach space. Let $\mathbb{K}(\mathbb{X},\mathbb{Y})$ be an $M-$ideal in $\mathbb{L}(\mathbb{X},\mathbb{Y}).$ Suppose that $T\in \mathbb{L}(\mathbb{X},\mathbb{Y}), \|T\|=1$ and  dist$ (T,\mathbb{K}(\mathbb{X},\mathbb{Y})) < 1. $ Suppose that $ M_T\cap Ext(B_{\mathbb{X}})=\{\pm x_1,\pm x_2,\\ \ldots,\pm x_r\},$ where $\{x_1,x_2,\ldots,x_r\}$ is linearly independent in $\mathbb{X}$ and $ x_i \bot_B x_j, \forall i,j, i \neq j.$ Then $T$ is $k-$smooth if and only if $Tx_i$ is $m_i-$smooth for each $1\leq i\leq r$ and $m_1+m_2+\ldots+m_r=k.$
\end{theorem}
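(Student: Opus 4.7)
The plan is to transcribe the argument of Theorem~\ref{th-ind} essentially verbatim, with the sole modification being the construction of the auxiliary operators $S_\beta$: since $\mathbb{X}$ is now infinite-dimensional in general, one cannot simply extend $\{x_1,\ldots,x_r\}$ to a basis and prescribe values freely, so one needs a bounded (in fact compact) operator that sends a chosen $x_i$ to a prescribed vector while vanishing on the other $x_l$'s. The smoothness of $\mathbb{X}$ together with the Birkhoff-James orthogonalities will supply exactly such a rank-one operator.

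I would first apply Lemma~\ref{lemma-wojcik}, whose hypotheses all hold by assumption, to identify
\[
\mathrm{Ext}\,J(T)=\{y^*\otimes x_i:1\leq i\leq r,\ y^*\in \mathrm{Ext}\,J(Tx_i)\},
\]
using $y^*\otimes(-x_i)=-(y^*\otimes x_i)$ to collapse the signs. For each $i$ set $m_i=\dim\,\mathrm{span}\,J(Tx_i)=\dim\,\mathrm{span}\,\mathrm{Ext}\,J(Tx_i)$ and pick a basis $\{y^*_{ij}:1\leq j\leq m_i\}\subseteq \mathrm{Ext}\,J(Tx_i)$ of that span. Put $W_i=\mathrm{span}\{y^*_{ij}\otimes x_i:1\leq j\leq m_i\}$ and $W=\sum_{i=1}^{r}W_i$. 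As in Theorem~\ref{th-ind}, the conclusion reduces to showing $\dim W_i=m_i$ and $W=\oplus_{i=1}^{r}W_i$, which together give $k=\dim\,\mathrm{span}\,\mathrm{Ext}\,J(T)=\sum_i m_i$ in both directions.

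The key new step replaces the prescription~(\ref{eq1}). Since $\mathbb{X}$ is smooth, $J(x_i)=\{x_i^*\}$ is a singleton for each $i$; the Birkhoff-James orthogonalities $x_i\perp_B x_j$, together with James's characterization of Birkhoff-James orthogonality via supporting functionals, then force $x_i^*(x_j)=0$ for every $j\neq i$. Given any Hamel basis $\{u_\beta:\beta\in\Lambda\}$ of $\mathbb{Y}$ and a fixed index $i$, I would define $S_\beta\in\mathbb{K}(\mathbb{X},\mathbb{Y})$ by $S_\beta(x)=x_i^*(x)\,u_\beta$; this is a rank-one, hence compact, operator with $S_\beta(x_i)=u_\beta$ and $S_\beta(x_l)=0$ for $l\neq i$, which is exactly what the basis-prescription in Theorem~\ref{th-ind} achieved.

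Armed with these $S_\beta$, the remainder of the argument is a straightforward transcription of the proof of Theorem~\ref{th-ind}: evaluating $\sum_j a_j(y^*_{ij}\otimes x_i)=0$ on $S_\beta$ yields $\sum_j a_j y^*_{ij}(u_\beta)=0$ for every $\beta$, hence $\sum_j a_j y^*_{ij}=0$ and all $a_j=0$, so $\dim W_i=m_i$; evaluating $z\in W_i\cap\sum_{l\neq i}W_l$ on the same operators forces $z=0$, giving the direct-sum decomposition. I expect the main obstacle to be precisely this linear-independence/direct-sum step, since without the smoothness and orthogonality hypotheses there need not exist any compact operator sending $x_i$ to a prescribed vector while killing each $x_l$, $l\neq i$; the hypotheses of the theorem are crafted so that the rank-one operator $x\mapsto x_i^*(x)\,u_\beta$ does this for us.
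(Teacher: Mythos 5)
Your proposal is correct and follows essentially the same route as the paper: the paper also transcribes the proof of Theorem~\ref{th-ind}, replacing the basis-prescribed operators by operators defined through smoothness and Birkhoff--James orthogonality, namely $S_\beta x_i=u_\beta$ and $S_\beta=0$ on the unique hyperplane $H_i$ with $x_i\perp_B H_i$, which (since $H_i=\ker x_i^*$ and $x_j\in H_i$ for $j\neq i$) is exactly the rank-one operator $x\mapsto x_i^*(x)u_\beta$ you write down explicitly.
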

\begin{proof}
 Suppose $T$ is $k-$smooth and $Tx_i$ is $m_i-$smooth for each $1\leq i\leq r.$  Then by \cite[Prop. 2.1]{LR}, for each $1\leq i\leq r,$ we have, $	m_i= dim~ span~ Ext~ J(Tx_i).$
Let $\{y_{ij}^*:1\leq j\leq m_i, y_{ij}^*\in Ext ~J(Tx_i)\}$ be a basis of $span~Ext~J(Tx_i)$ for each $1\leq i\leq r.$  Let 
\[W_i=span ~\{y_{ij}^*\otimes x_i:y_{ij}^*\in Ext ~J(Tx_i)\} ~\text{for each } 1\leq i\leq r.\]
We first show that $B_i=\{y_{ij}^*\otimes x_i:1\leq j\leq m_i\}$ is a basis of $W_i.$ 
Let $\sum\limits_{1\leq j\leq m_i}a_j(y_{ij}^*\otimes x_i)=0,$ where $a_j\in \mathbb{R}$ for all $1\leq j\leq m_i.$ Since $\mathbb{X}$ is smooth, for each $1\leq i \leq r,$ there exists a unique hyperspace $H_i$ such that $x_i\perp_B H_i.$ Therefore, $x_j\in H_i$ for all $1\leq j(\neq i)\leq r,$ since $x_i\perp_B x_j$ for all $1\leq j(\neq i)\leq r.$ Consider a Hamel basis $\{u_\beta:\beta \in \Lambda\}$ of $\mathbb{Y}.$ For each $\beta\in\Lambda,$ define $S_\beta:\mathbb{X}\to \mathbb{Y}$ as follows:
\begin{equation} \label{eq2}
\begin{split}
S_\beta x_i&=u_\beta\\
S_\beta x&= 0 \text{~~ for all } x\in H_i. 
\end{split}
\end{equation}
Then it is easy to see that $S_{\beta}\in  \mathbb{L}(\mathbb{X},\mathbb{Y}).$
Now, for each $\beta \in \Lambda,$\\
$\sum\limits_{1\leq j\leq m_i}a_j(y_{ij}^*\otimes x_i)(S_\beta)=0\Rightarrow\sum\limits_{1\leq j\leq m_i}a_jy_{ij}^*S_\beta(x_i)=0\Rightarrow \sum\limits_{1\leq j\leq m_i}a_jy_{ij}^*(u_\beta)=0\Rightarrow \sum\limits_{1\leq j\leq m_i}a_jy_{ij}^*=0\Rightarrow a_j=0 $ for all $1\leq j\leq m_i.$ Thus, $B_i$ is linearly independent. It can be easily verified that $B_i$ is a spanning set of $W_i.$ Hence, $B_i$ is a basis of $W_i$  and so $dim~W_i=m_i$ for each $1\leq i\leq r.$ Now,
\begin{eqnarray*}
	k&=&dim ~span ~J(T)\\
	&=& dim~ span~ Ext ~J(T)\\
	&=& dim~ span ~\{y_{ij}^*\otimes x_i:y_{ij}^*\in Ext ~J(Tx_i),1\leq i\leq r \}\\
	&=& dim~W,\text{~where,}\\
	W&=&span ~\{y_{ij}^*\otimes x_i:y_{ij}^*\in Ext~ J(Tx_i),1\leq i\leq r \}.
\end{eqnarray*}
We now show that $W=\oplus_{i=1}^{r}W_i.$ Clearly, $W=W_1+W_2+\ldots+W_r.$ Suppose that $z\in W_i\cap \sum\limits_{\substack{l=1\\l\neq i}}^{r}W_l$ for some $i,$ $1\leq i\leq r.$ Then \[z=\sum_{j=1}^{m_i}a_{ij}(y_{ij}^*\otimes x_i)=\sum\limits_{\substack{1\leq l(\neq i)\leq r}}w_l,\text{ where }w_l=\sum\limits_{\substack{1\leq j\leq m_l}}a_{lj}(y_{lj}^*\otimes x_l)\in W_l,a_{ij}\in \mathbb{R}.\]
For each $\beta\in\Lambda,$ considering $S_\beta\in \mathbb{L}(\mathbb{X},\mathbb{Y}),$ as defined in (\ref{eq2}), we have, \\ $\sum_{j=1}^{m_i}a_{ij}y_{ij}^*S_{\beta}(x_i)=\sum\limits_{\substack{1\leq l(\neq i)\leq r \\ 1\leq j\leq m_l}}a_{lj}y_{lj}^*S_\beta(x_l)\Rightarrow \sum_{j=1}^{m_i}a_{ij}y_{ij}^*(u_\beta)=0 \Rightarrow a_{ij}=0$ for all $1\leq j \leq m_i.$ Thus, $z=0\Rightarrow W_i\cap \sum\limits_{\substack{l=1\\l\neq i}}^{r}W_l=\{0\}.$ Therefore, $W=\oplus_{i=1}^{r}W_i.$ Hence, $k=dim~W=dim~\oplus_{i=1}^{r}W_i=\oplus_{i=1}^{r}~dim~W_i=m_1+m_2+\ldots+m_r.$ This completes the proof of the theorem.

\end{proof}

\begin{example}
	The above result can be used to determine the order of smoothness of operator $T$ defined on infinite dimensional $\ell_p(1<p(\neq 2)<\infty)$ spaces. As for example consider the operator $T\in \mathbb{L}(\ell_4,\ell_4)$ defined by $$T(a_1,a_2,a_3,a_4,\ldots)=2^{-\frac{3}{4}}(a_1+a_2,a_1-a_2,0,0,\ldots).$$
	Then it is easy to see that $M_T\cap Ext(B_{\ell_4})=\Big\{\pm\Big(\frac{1}{2^{\frac{1}{4}}},\frac{1}{2^{\frac{1}{4}}},0,0,0,\ldots\Big),$     $\pm\Big(-\frac{1}{2^{\frac{1}{4}}},\frac{1}{2^{\frac{1}{4}}},0,0,\ldots\Big)\Big\}.$ Since the space $\ell_4$ and the operator $T$ satisfies all the conditions of Theorem \ref{th-ind2}, we can conclude that $T$ is $2-$smooth.
\end{example}

\section{k-smoothness of operators defined between two-dimensional Banach spaces}
In this section, we completely characterize $k-$smoothness of an operator $T\in \mathbb{L}(\mathbb{X},\mathbb{Y}),$ depending on $|M_T\cap Ext(B_{\mathbb{X}})|,$ when both $\mathbb{X},\mathbb{Y}$ are two-dimensional Banach spaces. Consider the case $|M_T\cap Ext(B_{\mathbb{X}})|=2,$ i.e., $M_T\cap Ext(B_{\mathbb{X}})=\{\pm x_1\},$ in this case $T$ is smooth if $Tx_1$ is smooth and $T$ is $2-$smooth if $Tx_1$ is non-smooth, which follows clearly from Theorem \ref{th-ind}. Next, consider the case $|M_T\cap Ext(B_{\mathbb{X}})|=4,$ i.e., $M_T\cap Ext(B_{\mathbb{X}})=\{\pm x_1,\pm x_2\},$ in this case following Theorem \ref{th-ind}, we can conclude that $T$ is $2-$smooth when both $Tx_1,Tx_2$ are smooth, $T$ is $3-$smooth when only one of $Tx_1,Tx_2$ is smooth and $T$ is $4-$smooth when both $Tx_1,Tx_2$ are non-smooth. In case $|M_T\cap Ext(B_{\mathbb{X}})|>4,$ the situation is little bit complicated and we have to consider the two cases: $|M_T\cap Ext(B_{\mathbb{X}})|=6$ and $|M_T\cap Ext(B_{\mathbb{X}})|\geq 8.$ We first prove the following theorem.

\begin{theorem}\label{th-mt6}
	Suppose $\mathbb{X}, \mathbb{Y}$ are two-dimensional Banach spaces and $T\in \mathbb{L}(\mathbb{X},\mathbb{Y})$ is such that $\|T\|=1$ and $M_T\cap Ext(B_{\mathbb{X}})=\{\pm x_1,\pm x_2,\pm x_3\}.$ Then the following holds:\\
	(i) If $Tx_i$ is smooth for each $1\leq i\leq 3,$ then $T$ is $3-$smooth.\\
	(ii) If $Tx_1$ is not smooth and either $Tx_2,Tx_3$ are interior point of same line segment of unit sphere or $Tx_2,-Tx_3$ are interior point of same line segment of unit sphere, then $T$ is $3-$smooth.\\
	(iii) If $Tx_1$ is not smooth, $Tx_2,Tx_3$ are not interior point of the same line segment of unit sphere and $Tx_2,-Tx_3$ are not interior point of the same line segment of unit sphere, then $T$ is $4-$smooth. 
\end{theorem}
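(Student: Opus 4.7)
The plan is to apply Lemma \ref{lemma-wojcik}, which (since $\mathbb{X}$ is $2$-dimensional and hence reflexive with $\mathbb{K}(\mathbb{X},\mathbb{Y})=\mathbb{L}(\mathbb{X},\mathbb{Y})$) gives $Ext~J(T)=\{y^*\otimes x_i:1\le i\le 3,\ y^*\in Ext~J(Tx_i)\}$ after absorbing signs via $(-y^*)\otimes(-x_i)=y^*\otimes x_i$. Setting $W_i=span\{y^*\otimes x_i:y^*\in Ext~J(Tx_i)\}$, the order of smoothness of $T$ equals $dim(W_1+W_2+W_3)$. By constructing operators $S\in\mathbb{L}(\mathbb{X},\mathbb{Y})$ that send $x_i,x_j$ to prescribed vectors of $\mathbb{Y}$ (as in the proof of Theorem \ref{th-ind}), one first establishes the key arithmetic that $u^*\otimes x_i+v^*\otimes x_j=0$ forces $u^*=v^*=0$ whenever $\{x_i,x_j\}$ is linearly independent. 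This shows $dim\,W_i$ equals the order of smoothness of $Tx_i$---which is $1$ or $2$ since $\mathbb{Y}$ is $2$-dimensional---and that $W_i$ fills the entire $2$-dimensional subspace $\mathcal{X}_i:=\{y^*\otimes x_i:y^*\in\mathbb{Y}^*\}$ whenever $Tx_i$ is non-smooth.

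Since any two distinct extreme points of $B_\mathbb{X}$ are linearly independent in $2$ dimensions, I fix the expansion $x_3=\alpha x_1+\beta x_2$ with $\alpha,\beta\neq 0$. For (i), a hypothetical dependence $\sum_{i=1}^{3}a_i(y_i^*\otimes x_i)=0$, after substituting for $x_3$ and applying the key arithmetic, reduces to $a_1y_1^*+\alpha a_3 y_3^*=0$ and $a_2y_2^*+\beta a_3 y_3^*=0$ in $\mathbb{Y}^*$. A nontrivial dependence would force the three $y_i^*$ to be mutually proportional, and since they lie on $S_{\mathbb{Y}^*}$, this forces $y_1^*=y_2^*=y_3^*$ up to signs. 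The configuration I must rule out is $y_1^*=y_2^*=y_3^*=y^*$: setting $\phi:=y^*\circ T\in\mathbb{X}^*$ gives $\|\phi\|=1$ with $\phi$ attaining its norm at the three distinct extreme points $x_1,x_2,x_3$, forcing them into one face of $B_\mathbb{X}$---but a face of the unit ball of a $2$-dimensional space has at most two extreme endpoints. A symmetric sign argument handles the mixed patterns, yielding $dim\,W=3$.

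For case (ii), $Tx_1$ non-smooth gives $W_1=\mathcal{X}_1$, and the hypothesis forces $Tx_2,Tx_3$ smooth with $y_2^*=\pm y_3^*=:\pm y^*$, so $W_2+W_3=\{y^*\otimes x:x\in\mathbb{X}\}$ is $2$-dimensional; since $y^*\otimes x_1$ lies in both $\mathcal{X}_1$ and this subspace and rank-$1$ tensors with parallel right factors coincide up to a scalar, the intersection is $1$-dimensional, giving $dim\,W=2+2-1=3$. For case (iii), I split on subcases: if one of $Tx_2,Tx_3$ is non-smooth, the corresponding $W_j=\mathcal{X}_j$, and $\mathcal{X}_1+\mathcal{X}_j=\mathbb{L}(\mathbb{X},\mathbb{Y})^*$ (these subspaces intersect trivially by the key arithmetic); if both are smooth with $y_2^*\neq\pm y_3^*$, then expanding $y_3^*\otimes x_3=\alpha(y_3^*\otimes x_1)+\beta(y_3^*\otimes x_2)$ and using that $y_2^*$ and $y_3^*$ are linearly independent in $\mathbb{Y}^*$ shows $y_3^*\otimes x_3\notin W_1+W_2$, so $dim\,W=4$.

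The main obstacle I expect is the geometric step in case (i): the algebra alone does not preclude all three smoothness functionals from coinciding, and only the face structure of $B_\mathbb{X}$ combined with the observation that $y^*\circ T$ would attain its norm at three distinct extreme points supplies the needed contradiction. The remaining case-work is routine linear algebra in the $4$-dimensional space $\mathbb{L}(\mathbb{X},\mathbb{Y})^*$.
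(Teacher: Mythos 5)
Your overall strategy is the same as the paper's: use W\'ojcik's lemma to identify $Ext~J(T)$ with the rank-one functionals $y^*\otimes x_i$, reduce the order of smoothness to $dim(W_1+W_2+W_3)$, and settle dimensions by constructing operators with prescribed values at the $x_i$ (the paper's Lemma \ref{lemma-01}/Theorem \ref{th-ind} device). Your case (i) is a mild variant and in fact tidier: the paper first excludes a common supporting functional by showing the segments $[x_i,x_j]$ would lie in $S_{\mathbb{X}}$ and then solves four scalar equations after expanding $x_3$ and $y_3^*$; you instead deduce from a hypothetical dependence that $y_1^*,y_2^*,y_3^*$ must be proportional and kill this with $\phi=y^*\circ T$, which would attain its norm at three distinct extreme points of $B_{\mathbb{X}}$ inside a single exposed face (a point or a segment in two dimensions). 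This handles all $\pm$ sign patterns uniformly, whereas the paper only rules out exact equality of the three functionals before asserting ``without loss of generality $y_1^*\neq\pm y_2^*$.'' Your computations in (ii) and (iii) coincide with the paper's.

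One caution about (iii): your subcase split (one of $Tx_2,Tx_3$ non-smooth; both smooth with $y_2^*\neq \pm y_3^*$) omits the subcase where both are smooth with $y_2^*=\pm y_3^*$. You evidently take the hypothesis of (iii) to exclude it, but literally it does not: two smooth points can share a supporting functional while being the \emph{endpoints}, not interior points, of the common maximal face of $S_{\mathbb{Y}}$ (e.g.\ the endpoints of the flat edge of a ``truncated stadium'' ball are smooth extreme points), and then $span~Ext~J(T)$ is only $3$-dimensional. Indeed, taking $\mathbb{Y}$ of that type, $B_{\mathbb{X}}$ the hexagon spanned by the two edge-endpoints and a corner of $B_{\mathbb{Y}}$, and $T$ the identity, all hypotheses of (iii) hold while $T$ is $3$-smooth; so the missing subcase is not vacuous under the stated hypotheses. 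This is precisely the step the paper itself elides (it simply asserts that $y_2^*\in Ext~J(Tx_2)$, $y_3^*\in Ext~J(Tx_3)$ with $y_2^*\neq\pm y_3^*$ exist), so your write-up is on par with the published proof; but to make (iii) airtight you must either read the hypothesis as ``$Tx_2$ and $\pm Tx_3$ do not lie on a common line segment of $S_{\mathbb{Y}}$'' (equivalently, they share no supporting functional up to sign) or add an argument disposing of the endpoint configuration.
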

\begin{proof}
	Clearly, $T$ is $k-$smooth for some $1\leq k\leq 4,$ since $dim(\mathbb{X})=dim(\mathbb{Y})=2.$
	$(i)$ Suppose $Tx_i$ is smooth for each $1\leq i\leq 3.$ Then $Tx_i$ has unique supporting linear functional for each $1\leq i\leq 3.$ We first show that $Tx_1,Tx_2,Tx_3$ cannot have same supporting linear functional. If possible, suppose that $J(Tx_i)=\{y^*\}$ for all $i=1,2,3.$ Then $y^*(Tx_1)=y^*(Tx_2)=y^*(Tx_3)=1.$ Hence, for all $t\in[0,1],~y^*(tTx_1+(1-t)Tx_2)=1\Rightarrow \|tTx_1+(1-t)Tx_2\|=1,$ since $\|y^*\|=1.$ Thus, $\|T(tx_1+(1-t)x_2)\|=1$ and $\|T\|=1$ together gives that $\|tx_1+(1-t)x_2\|=1$ for all $t\in[0,1].$ This implies that $x_1,x_2$ are on same line segment of unit sphere. Similarly, $x_1,x_3$ and $x_2,x_3$ are on same line segment of unit sphere. This contradicts that $x_1,x_2,x_3$ are distinct extreme points of $B_{\mathbb{X}}.$ Therefore, without loss of generality, we may assume that $J(Tx_i)=\{y_i^*\}$ for all $i=1,2,3$ and $y_1^*\neq \pm y_2^*.$ Since $\mathbb{X}$ is two dimensional and $x_1,x_2,x_3$ are distinct extreme points of $B_{\mathbb{X}},$ we have $x_3=\gamma x_1+\delta x_2$ for some $\gamma(\neq 0),\delta(\neq 0)\in \mathbb{R}.$ Now, $y_1^*\neq \pm y_2^*\Rightarrow \{y_1^*,y_2^*\}$ is linearly independent in $Y^*.$ Therefore, $y_3^*=\alpha y_1^*+\beta y_2^*$ for some $\alpha,\beta\in \mathbb{R}.$  Since $T$ is $k-$smooth, 
		\begin{eqnarray*}
		k&=&dim ~span ~J(T)\\
		&=& dim~ span~ Ext ~J(T)\\
		&=& dim~ span ~\{y_i^*\otimes x_i:1\leq i\leq 3 \}.
	\end{eqnarray*}
We show that $\{y_i^*\otimes x_i:1\leq i\leq 3 \}$ is linearly independent. Let
\begin{eqnarray*}
	&&a_1y_1^*\otimes x_1+a_2y_2^*\otimes x_2+a_3y_3^*\otimes x_3=0,~\text{where~} a_1,a_2,a_3\in \mathbb{R}, \\
	&\Rightarrow &	a_1y_1^*\otimes x_1+a_2y_2^*\otimes x_2+a_3(\alpha y_1^*+\beta y_2^*)\otimes(\gamma x_1+\delta x_2)=0\\
	&\Rightarrow& (a_1+a_3\alpha\gamma)y_1^*\otimes x_1+(a_2+a_3\beta\delta)y_2^*\otimes x_2+a_3\alpha\delta y_1^*\otimes x_2+a_3\beta\gamma y_2^*\otimes x_1=0.
\end{eqnarray*}
Now, using Lemma \ref{lemma-01}, we have, $a_1+a_3\alpha\gamma=0,~a_2+a_3\beta\delta=0,~a_3\alpha\delta=0$ and $a_3\beta\gamma=0.$ Solving these $4$ equations, we get $a_1=a_2=a_3=0.$ Therefore,  $\{y_i^*\otimes x_i:1\leq i\leq 3 \}$ is linearly independent. Thus, $T$ is $3-$smooth. \\

$(ii)$ Suppose that $Tx_1$ is not smooth. Without loss of generality, assume that  $Tx_2,Tx_3$ are interior point of same line segment of unit sphere. Then $Tx_2,Tx_3$ have same unique supporting linear functional say, $z^*,$ i.e., $J(Tx_2)=J(Tx_3)=\{z^*\}.$ Since $Tx_1$ is not smooth and $\mathbb{Y}$ is two-dimensional, it is easy to see that $Ext ~J(Tx_1)=\{y_1^*,y_2^*\}$ for some linearly independent set $\{y_1^*,y_2^*\}$  of $\mathbb{Y}^*.$  Now, $x_3=ax_1+bx_2$ for some $a(\neq 0),b(\neq 0)\in \mathbb{R}$ and $z^*=\alpha y_1^*+\beta y_2^*$ for some $\alpha,\beta\in \mathbb{R}.$ Therefore, $z^*\otimes x_3=(\alpha y_1^*+\beta y_2^*)\otimes(ax_1+bx_2)=a\alpha y_1^*\otimes x_1+a\beta y_2^*\otimes x_1+bz^*\otimes x_2\in span \{y_1^*\otimes x_1,y_2^*\otimes x_1,z^*\otimes x_2\}.$ Thus, 
\begin{eqnarray*}
k&=&dim~span~Ext~J(T)\\
&=&dim~span~\{y_1^*\otimes x_1,y_2^*\otimes x_1,z^*\otimes x_2,z^*\otimes x_3\}\\
&=&dim~span~\{y_1^*\otimes x_1,y_2^*\otimes x_1,z^*\otimes x_2\}.
\end{eqnarray*}
We next show that $\{y_1^*\otimes x_1,y_2^*\otimes x_1,z^*\otimes x_2\}$ is linearly independent. Let $a_1y_1^*\otimes x_1+a_2y_2^*\otimes x_1+a_3z^*\otimes x_2=0,$ where $a_i\in \mathbb{R}~(i=1,2,3).$ Then 
\begin{eqnarray}\label{eq-01}
a_1y_1^*S( x_1)+a_2y_2^*S( x_1)+a_3z^*S( x_2)=0~\text{for all} ~S\in \mathbb{L}(\mathbb{X},\mathbb{Y}).
\end{eqnarray}
 Define $S_1,S_2\in \mathbb{L}(\mathbb{X},\mathbb{Y})$ as follows:
\begin{align*}
S_1x_1 & = 0 &    S_2 x_1 & = u_2\\
S_1x_2 & = u_1  &   S_2x_2 & = 0,
\end{align*}
where $u_1\notin \ker(z^*)$ and $u_2\in \ker(y_1^*)\setminus \ker(y_2^*).$ Now, putting $S_1,S_2$ in (\ref{eq-01}), we get,  $a_2=a_3=0.$ Thus, $a_1y_1^*\otimes x_1=0.$ Since $x_1\neq 0$ and $y_1^*\neq 0,$ we have, $a_1=0.$ Therefore, $\{y_1^*\otimes x_1,y_2^*\otimes x_1,z^*\otimes x_2\}$ is linearly independent subset of $\mathbb{L}(\mathbb{X},\mathbb{Y})^*.$ Thus, $k=3$ and so $T$ is $3-$smooth.\\

$(iii)$ Suppose $Tx_1$ is not smooth, $Tx_2,Tx_3$ are not interior point of the same line segment of unit sphere and $Tx_2,-Tx_3$ are not interior point of the same line segment of unit sphere. Then $Ext~J(Tx_1)=\{y_{11}^*,y_{12}^*\}$ for some linearly independent subset $\{y_{11}^*,y_{12}^*\}$ of $\mathbb{Y}^*$ and there exist $y_2^*\in Ext~J(Tx_2)$ and $y_3^*\in Ext~J(Tx_3)$ such that $y_2^*\neq \pm y_3^*.$ Now,
\begin{eqnarray*}
	4\geq k&=&dim~span~Ext~J(T)\\
	&\geq&dim~span~\{y_{11}^*\otimes x_1,y_{12}^*\otimes x_1,y_2^*\otimes x_2,y_3^*\otimes x_3\}.
\end{eqnarray*}
As before, choosing $S$ suitably from $\mathbb{L}(\mathbb{X},\mathbb{Y})$ it can be easily shown that $\{y_{11}^*\otimes x_1,y_{12}^*\otimes x_1,y_2^*\otimes x_2,y_3^*\otimes x_3\}$ is linearly independent subset of $\mathbb{L}(\mathbb{X},\mathbb{Y})^*.$ Thus, $k=4$ and so $T$ is $4-$smooth. This completes the proof of the theorem.
\end{proof}

In addition to $|M_T\cap Ext(B_{\mathbb{X}})|=6,$ if we assume the strict convexity of either $\mathbb{X}$ or $\mathbb{Y},$ then the $k-$smoothness of $T$ can be characterized as follows.

\begin{corollary}
Suppose	$\mathbb{X},\mathbb{Y}$ are two-dimensional Banach spaces and either $\mathbb{X}$ or $\mathbb{Y}$ is strictly convex. Let $T\in \mathbb{L}(\mathbb{X},\mathbb{Y})$ be such that $M_T\cap Ext(B_{\mathbb{X}})=\{\pm x_1,\pm x_2,\pm x_3\}.$ Then $T$ is $3-$smooth if and only if $Tx_i$ is smooth for all $i=1,2,3,$ otherwise $T$ is $4-$smooth.
\end{corollary}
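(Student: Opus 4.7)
The plan is to reduce the corollary directly to Theorem \ref{th-mt6}, whose three cases already exhaust every configuration with $|M_T\cap Ext(B_{\mathbb{X}})|=6$. The only extra input needed is that strict convexity of either factor eliminates the middle case $(ii)$, after which the stated dichotomy is immediate: $(i)$ gives $3$-smoothness when all $Tx_i$ are smooth, and $(iii)$ gives $4$-smoothness whenever any one $Tx_i$ fails to be smooth.

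The forward implication is essentially free: if $Tx_1, Tx_2, Tx_3$ are all smooth, part $(i)$ of Theorem \ref{th-mt6} gives that $T$ is $3$-smooth. For the converse I would assume, without loss of generality, that $Tx_1$ is not smooth, and show that under the strict convexity hypothesis case $(ii)$ cannot occur, so that only $(iii)$ remains and $T$ must be $4$-smooth.

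Now the two subcases. If $\mathbb{Y}$ is strictly convex, $S_{\mathbb{Y}}$ contains no non-trivial line segment, so neither $\{Tx_2,Tx_3\}$ nor $\{Tx_2,-Tx_3\}$ can be a pair of interior points of a common segment of $S_{\mathbb{Y}}$, and case $(ii)$ is vacuous. If instead $\mathbb{X}$ is strictly convex, I would argue by contradiction: assume that $Tx_2$ and $\varepsilon Tx_3$, for some $\varepsilon\in\{+1,-1\}$, are interior points of a common line segment of $S_{\mathbb{Y}}$. Then $\|tTx_2+(1-t)\varepsilon Tx_3\|=1$ for every $t\in[0,1]$, whence $\|T(tx_2+(1-t)\varepsilon x_3)\|=1$, and since $\|T\|=1$ combined with the triangle inequality forces $\|tx_2+(1-t)\varepsilon x_3\|=1$ for every $t\in[0,1]$. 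Thus $x_2$ and $\varepsilon x_3$ lie on a common line segment of $S_{\mathbb{X}}$, contradicting the strict convexity of $\mathbb{X}$ since $x_2$ and $\pm x_3$ are distinct points of $S_{\mathbb{X}}$. Hence case $(ii)$ is again excluded and $(iii)$ gives that $T$ is $4$-smooth.

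The real work is already contained in Theorem \ref{th-mt6}, so no serious obstacle is anticipated. The only technical step is the transfer of the line-segment condition from $S_{\mathbb{Y}}$ back to $S_{\mathbb{X}}$ in the second subcase, and this is a routine consequence of $\|T\|=1$.
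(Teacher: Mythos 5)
Your proposal is correct and follows essentially the same route as the paper: apply Theorem \ref{th-mt6} and show that strict convexity of $\mathbb{X}$ (by pulling the line segment of $S_{\mathbb{Y}}$ back through $T$ using $\|T\|=1$) or of $\mathbb{Y}$ (no segments in $S_{\mathbb{Y}}$) rules out case $(ii)$. The only difference is cosmetic: you treat the $\pm Tx_3$ possibilities explicitly, while the paper handles one and leaves the other as analogous.
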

\begin{proof}
	At first suppose that $\mathbb{X}$ is strictly convex. We only show that case $(ii)$ of Theorem \ref{th-mt6} does not hold. If possible, suppose that $Tx_2,Tx_3$ are interior point of same line segment. Then $Tx_2,Tx_3$ have same supporting linear functional.Then there exists $y^*\in S_{\mathbb{Y}^*}$ such that $y^*(Tx_2)=y^*(Tx_3)=1.$ So for all $t\in[0,1],y^*((1-t)Tx_2+tTx_3)=1\Rightarrow\|(1-t)x_2+tx_3\|=1$ which contradicts that $\mathbb{X}$ is strictly convex. Therefore, case $(ii)$ of Theorem \ref{th-mt6} does not hold and the result follows from Theorem \ref{th-mt6}.\\
When $\mathbb{Y}$ is strictly convex, case $(ii)$ of Theorem \ref{th-mt6} does not arise and the result follows easily.
\end{proof}

The only case remaining to completely characterize $k-$smoothness of an operator $T$ between two-dimensional Banach spaces $\mathbb{X}$ and $\mathbb{Y}$ is $|M_T\cap Ext(B_{\mathbb{X}})|\geq 8.$ In the next theorem, we consider this case.

\begin{theorem}\label{th-mt8}
Suppose	$\mathbb{X},\mathbb{Y}$ are two-dimensional Banach spaces. Let $T\in \mathbb{L}(\mathbb{X},\mathbb{Y})$ be such that $|M_T\cap Ext(B_{\mathbb{X}})|\geq 8.$  Then the following holds:\\
(i) If $Tx$ is not smooth for some $x\in M_T\cap Ext(B_{\mathbb{X}}),$ then $T$ is $4-$smooth.\\
(ii)  Suppose $Tx$ is smooth for each $x\in M_T\cap Ext(B_{\mathbb{X}}).$ If there exist $x_i\in  M_T\cap Ext(B_{\mathbb{X}}),~y_i^*\in J(Tx_i)$ for $i=1,2,3,4$ such that $x_2=ax_1+bx_3, x_4=cx_1+dx_3$ and $y_2^*=\alpha_1y_1^*+\alpha_2y_3^*,y_4^*=\beta_1y_1^*+\beta_2y_3^*$ with $ \beta_1\alpha_2 ad-\beta_2\alpha_1bc\neq 0,$ then $T$ is $4-$smooth. Otherwise $T$ is $3-$smooth.
\end{theorem}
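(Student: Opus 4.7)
The plan is to apply Lemma \ref{lemma-wojcik} (valid here because $\mathbb{X}$ is $2$-dimensional, hence reflexive and $\mathbb{K}(\mathbb{X},\mathbb{Y})=\mathbb{L}(\mathbb{X},\mathbb{Y})$, with dist$(T,\mathbb{K}(\mathbb{X},\mathbb{Y}))=0<1$) to identify $Ext~J(T) = \{y^* \otimes x : x \in M_T \cap Ext(B_{\mathbb{X}}),\ y^* \in Ext~J(Tx)\}$, and then to compute the dimension of its linear span inside $\mathbb{L}(\mathbb{X},\mathbb{Y})^*$, a space of dimension $4$. Hence the order of smoothness lies in $\{1,2,3,4\}$, and the task reduces to exhibiting explicit linearly independent sets with the aid of Lemma \ref{lemma-01} as the main computational tool.

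For part $(i)$, suppose $Tx_1$ is not smooth. Then $Ext~J(Tx_1) = \{y_{11}^*, y_{12}^*\}$ forms a basis of $\mathbb{Y}^*$ (since $\dim \mathbb{Y} = 2$), and both $y_{11}^* \otimes x_1$ and $y_{12}^* \otimes x_1$ belong to $Ext~J(T)$. Since $|M_T \cap Ext(B_{\mathbb{X}})| \geq 8$, I choose two further pairs $\pm x_2, \pm x_3$ with $x_1, x_2, x_3$ pairwise non-parallel. I split into two subcases. Case A: at least one of $Tx_2, Tx_3$ is also non-smooth (say $Tx_2$, with $Ext~J(Tx_2) = \{y_{21}^*, y_{22}^*\}$). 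Case B: both $Tx_2, Tx_3$ are smooth with unique supporting functionals $y_2^*, y_3^*$; here I first argue $y_2^* \neq \pm y_3^*$ by the Theorem \ref{th-mt6}$(i)$ reasoning (equality would force $x_2, \pm x_3$ onto a common line segment of $S_{\mathbb{X}}$, contradicting their being distinct extreme points). In either case, Lemma \ref{lemma-01} with the basis $\{x_1, x_2\}$ of $\mathbb{X}$ and $\{y_{11}^*, y_{12}^*\}$ of $\mathbb{Y}^*$ supplies a basis $\{y_{1i}^* \otimes x_j : i, j \in \{1,2\}\}$ of $\mathbb{L}(\mathbb{X},\mathbb{Y})^*$; expanding the four candidate functionals in this basis and evaluating the $4\times 4$ determinant yields a nonzero value---in Case A the change-of-basis determinant from $\{y_{11}^*, y_{12}^*\}$ to $\{y_{21}^*, y_{22}^*\}$, and in Case B the quantity $\delta(ad-bc)$ where $x_3 = \gamma x_1 + \delta x_2$ and $y_2^* = ay_{11}^* + by_{12}^*$, $y_3^* = cy_{11}^* + dy_{12}^*$. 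Thus $T$ is $4$-smooth.

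For part $(ii)$, every $Tx$ is smooth. Fix $x_1, x_3 \in M_T \cap Ext(B_{\mathbb{X}})$ with $x_1 \neq \pm x_3$. The Theorem \ref{th-mt6}$(i)$ argument gives $y_1^* \neq \pm y_3^*$, so $\{y_1^*, y_3^*\}$ is a basis of $\mathbb{Y}^*$, and Lemma \ref{lemma-01} produces a basis $\{y_i^* \otimes x_j : i,j \in \{1,3\}\}$ of $\mathbb{L}(\mathbb{X},\mathbb{Y})^*$. For admissible $x_2, x_4 \in M_T \cap Ext(B_{\mathbb{X}}) \setminus \{\pm x_1, \pm x_3\}$, expanding $y_2^* \otimes x_2$ and $y_4^* \otimes x_4$ in this basis (via the coefficients $a,b,c,d,\alpha_i,\beta_i$ appearing in the statement) and computing the $4 \times 4$ determinant of the matrix whose rows are $y_1^* \otimes x_1, y_3^* \otimes x_3, y_2^* \otimes x_2, y_4^* \otimes x_4$ produces, up to sign, the expression $\beta_1 \alpha_2 ad - \beta_2 \alpha_1 bc$. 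If this quantity is nonzero for some admissible choice, $T$ is $4$-smooth. In the contrary ``otherwise'' case, I first verify that $\{y_1^* \otimes x_1, y_3^* \otimes x_3, y_2^* \otimes x_2\}$ is linearly independent for any $x_2 \in M_T \cap Ext(B_{\mathbb{X}}) \setminus \{\pm x_1, \pm x_3\}$ (the simultaneous vanishing $b\alpha_1 = a\alpha_2 = 0$ would force $x_2 \in \{\pm x_1, \pm x_3\}$); and second, the vanishing hypothesis $\beta_1 \alpha_2 ad - \beta_2 \alpha_1 bc = 0$ for every $x_4$ makes each $y_4^* \otimes x_4$ lie in the span of these three. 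Hence $\dim\ span\ Ext~J(T) = 3$ and $T$ is $3$-smooth.

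The main obstacle will be the bookkeeping in expanding each tensor $y_i^* \otimes x_j$ into coordinates and extracting the $4\times 4$ determinant cleanly, in particular so that in part $(ii)$ it matches the precise algebraic form $\beta_1 \alpha_2 ad - \beta_2 \alpha_1 bc$. A secondary subtlety is the repeated use of the auxiliary fact that distinct extreme points of $B_{\mathbb{X}}$ with smooth images under $T$ must carry non-parallel supporting functionals, which is invoked both in Case B of part $(i)$ and when setting up the working basis of $\mathbb{L}(\mathbb{X},\mathbb{Y})^*$ in part $(ii)$.
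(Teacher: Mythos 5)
Your overall framework (Lemma \ref{lemma-wojcik} to identify $Ext~J(T)$, then Lemma \ref{lemma-01} and explicit rank computations in the four-dimensional space $\mathbb{L}(\mathbb{X},\mathbb{Y})^*$, with the determinant $\beta_1\alpha_2ad-\beta_2\alpha_1bc$ appearing in part (ii)) is exactly the paper's, and your Case A of part (i) and the three-element independence check in part (ii) are fine. The genuine gap is the auxiliary claim you invoke twice: that two distinct extreme points of $B_{\mathbb{X}}$ in $M_T$ whose images are smooth must carry non-proportional supporting functionals, ``since equality would force them onto a common line segment of $S_{\mathbb{X}}$, contradicting their being distinct extreme points.'' That is not a contradiction: two distinct extreme points can be the two \emph{endpoints} of a segment lying in $S_{\mathbb{X}}$ (adjacent vertices of a polygonal ball). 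Indeed the claim can fail under the hypotheses of the theorem: let $B_{\mathbb{X}}$ be the regular octagon with vertices $v_1,\dots,v_4,-v_1,\dots,-v_4$, let $B_{\mathbb{Y}}$ be a centrally symmetric polygon containing $B_{\mathbb{X}}$ whose boundary passes through all eight vertices, with one edge whose relative interior contains both $v_2$ and $v_3$, and let $T$ be the identity map. Then $|M_T\cap Ext(B_{\mathbb{X}})|=8$ and $Tv_2,Tv_3$ are smooth with the \emph{same} supporting functional; if moreover $B_{\mathbb{Y}}$ has a corner at $v_1$, your Case B of part (i) with the choice $x_2=v_2$, $x_3=v_3$ produces $ad-bc=0$ and proves nothing, and if all eight images are smooth the same coincidence invalidates your choice of $\{y_i^*\otimes x_j\}$ as a basis in part (ii).

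What your outline is missing is precisely how the paper exploits $|M_T\cap Ext(B_{\mathbb{X}})|\geq 8$. In part (i) the paper keeps all four pairs and relabels so that $x_1=\frac{(1-s)x_2-sx_4}{\|(1-s)x_2-sx_4\|}$ and $x_3=\frac{(1-t)x_2+tx_4}{\|(1-t)x_2+tx_4\|}$ with $s,t\in(0,1)$; then $y_2^*=y_4^*$ would give $\|(1-t)x_2+tx_4\|=1$ for all $t\in[0,1]$, placing the extreme point $x_3$ in the relative interior of a segment of $S_{\mathbb{X}}$ --- that interior point, not the mere distinctness of the two endpoints, is the contradiction (and $y_2^*=-y_4^*$ is ruled out the same way using $x_1$). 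So the pair flanking the non-smooth direction must be chosen with this betweenness, not arbitrarily as in your Case B. In part (ii) the paper needs no such auxiliary fact: it works directly with the quadruple $x_i,y_i^*$ furnished by the hypothesis, whose expansions in $x_1,x_3$ and $y_1^*,y_3^*$ already carry the required independence, derives the four linear equations via Lemma \ref{lemma-01}, and in the ``otherwise'' case shows every such quadruple is dependent while a triple stays independent as in Theorem \ref{th-mt6}(i). Once you replace your unjustified non-proportionality claim by the paper's selection argument (or some equivalent use of a fourth extreme point), the rest of your computation goes through.
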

\begin{proof}
    Clearly, $T$ is $k-$smooth for some $1\leq k\leq 4,$ since $dim(\mathbb{X})=dim(\mathbb{Y})=2.$ Since $|M_T\cap Ext(B_{\mathbb{X}})|\geq 8,$ we may assume that $\{\pm x_1,\pm x_2,\pm x_3,\pm x_4\}\subseteq M_T\cap Ext(B_{\mathbb{X}}).$ \\
 	$(i)$ Assume that $Tx_1$ is not smooth. Without loss of generality, we may assume that $x_1=\frac{(1-s)x_2-sx_4}{\|(1-s)x_2-sx_4\|}$ and $x_3=\frac{(1-t)x_2+tx_4}{\|(1-t)x_2+tx_4\|}$ for some $s,t\in(0,1).$ Let $y_{11}^*,y_{12}^*$ be two linearly independent vectors in $Ext~J(Tx_1).$ Suppose $y_2^*\in Ext~J(Tx_2),y_4^*\in Ext~J(Tx_4).$ Then $y_2^*\neq \pm y_4^*,$ for if $y_2^*= y_4^*,$ then as in Theorem \ref{th-mt6} $(i)$, it can be shown that $\|(1-t)x_2+tx_4\|=1$ for all $t\in[0,1].$ This contradicts that $x_3$ is an extreme point of $B_{\mathbb{X}}.$ Thus, $y_2^*\neq y_4^*.$ Similarly, $y_2^*\neq -y_4^*.$ Thus, $y_2^*$ and $y_4^*$ are linearly independent. Since $T$ is $k-$smooth, we have,
	\begin{eqnarray*}
	4 \geq k&=& dim~span~Ext~J(T)\\
	&\geq & dim ~span~\{y_{11}^*\otimes x_1,y_{12}^*\otimes x_1,y_2^*\otimes x_2,y_4^*\otimes x_4\} .
	\end{eqnarray*}
We claim that $\{y_{11}^*\otimes x_1,y_{12}^*\otimes x_1,y_2^*\otimes x_2,y_4^*\otimes x_4\}$ is linearly independent. Let $a y_{11}^*\otimes x_1+b y_{12}^*\otimes x_1+c y_2^*\otimes x_2+d y_4^*\otimes x_4=0,$ where  $a,b,c,d\in \mathbb{R}.$ Then 
\begin{eqnarray}\label{eq-02}
a y_{11}^*S(x_1)+b y_{12}^*S(x_1)+c y_2^*S( x_2)+d y_4^*S( x_4)=0 ~\forall~S\in \mathbb{L}(\mathbb{X},\mathbb{Y}).
\end{eqnarray}
For $1\leq i\leq 4,$ define $S_i\in \mathbb{L}(\mathbb{X},\mathbb{Y})$ as follows:
\begin{align*}
S_1x_1 & = 0 &    S_2 x_1 & = 0 & S_3 x_1&=u_3 &  S_4 x_1=u_4\\
S_1x_2 & = u_1  &   S_2x_2 & = u_2 & S_3 x_2&=0 &  S_4 x_2=0,
\end{align*}
where $u_1\in \ker(y_2^*) \setminus \ker(y_4^*)$ and $u_2\in \ker(y_4^*)\setminus \ker(y_2^*),u_3\in  \ker(y_{11}^*) \setminus \ker(y_{12}^*),u_4\in  \ker(y_{12}^*) \setminus \ker(y_{11}^*). $ Now, putting $S_1,S_2,S_3,S_4$ in (\ref{eq-02}), we get, $a=b=c=d=0.$ Therefore, $\{y_{11}^*\otimes x_1,y_{12}^*\otimes x_1,y_2^*\otimes x_2,y_4^*\otimes x_4\}$ is linearly independent. Thus, $k=4$ and so $T$ is $4-$smooth.\\

$(ii)$  Suppose $Tx $ is smooth for each $x \in M_T\cap Ext(B_{\mathbb{X}}) $ and  $\beta_1\alpha_2 ad-\beta_2\alpha_1bc\neq 0.$ Clearly  
	$ 	4 \geq k= dim~span~Ext~J(T)\geq  dim ~span~\{y_i^*\otimes x_i:1\leq i\leq 4\} $.
We claim that $ \{y_i^*\otimes x_i:1\leq i\leq 4\} $ is linearly independent.\\
Let 
$a_1y_1^*\otimes x_1+a_2y_2^*\otimes x_2+a_3y_3^*\otimes x_3+a_4y_4^*\otimes x_4=0, $ where $a_i\in \mathbb{R}, 1 \leq i \leq4.$  Then  \\
$ a_1y_1^*\otimes x_1+a_2(\alpha_1y_1^*+\alpha_2y_3^*)\otimes (ax_1+bx_3)+
a_3y_3^*\otimes x_3+a_4(\beta_1y_1^*+\beta_2y_3^*)\otimes (cx_1+dx_3)=0$.\\
$\Rightarrow (a_1+a_2\alpha_1 a+a_4\beta_1 c)y_1^*\otimes x_1+(a_2\alpha_1 b+a_4\beta_1 d)y_1^*\otimes x_3+
(a_2\alpha_2 a+a_4\beta_2 c)y_3^*\otimes x_1+(a_3+a_2\alpha_2 b+a_4\beta_2 d)y_3^*\otimes x_3=0.$

Now, using Lemma \ref{lemma-01}, $\{y_1^*\otimes x_1,y_1^*\otimes x_3,y_3^*\otimes x_1,y_3^*\otimes x_3\}$ is a linearly independent set. Hence, $a_1+a_2\alpha_1 a+a_4\beta_1 c=0,a_2\alpha_1 b+a_4\beta_1 d=0,a_2\alpha_2 a+a_4\beta_2 c=0$ and $a_3+a_2\alpha_2 b+a_4\beta_2 d=0.$ Solving these equations, we get, $a_1,a_2,a_3,a_4=0,$ since $\beta_1\alpha_2 ad-\beta_2\alpha_1bc\neq 0.$ Therefore, $dim ~span~\{y_i^*\otimes x_i:1\leq i\leq 4\}=4\Rightarrow k=4.$ Thus, $T$ is $4-$smooth.\\
Now, suppose that for each $\{\pm x_i:1\leq i\leq 4\}\subseteq   M_T\cap Ext(B_{\mathbb{X}}) $ and $y_i^*\in J(Tx_i)$ for $i=1,2,3,4,$  $x_2=ax_1+bx_3, x_4=cx_1+dx_3$ and $y_2^*=\alpha_1y_1^*+\alpha_2y_3^*,y_4^*=\beta_1y_1^*+\beta_2y_3^*\Rightarrow \beta_1\alpha_2 ad-\beta_2\alpha_1bc= 0.$ Then $\{y_i^*\otimes x_i:1\leq i\leq 4\}$ is a linearly dependent set. Hence, $k<4.$ Proceeding similarly as in Theorem \ref{th-mt6} $(i)$ we can show that $\{y_i^*\otimes x_i:1\leq i\leq 3\}$ is linearly independent. Therefore, $k=3$ and so $T$ is $3-$smooth. This completes the proof of the theorem.
\end{proof}

Observe that if $\mathbb{X}$ is a two-dimensional Banach space such that the unit sphere of $\mathbb{X}$ is a polygon with more than $6$ vertices, then the identity operator on $\mathbb{X}$ satisfies the hypothesis of Theorem \ref{th-mt8} $(i)$ and so it is $4-$smooth. Now, we exhibit two examples to show that there exist two-dimensional Banach spaces $\mathbb{X},\mathbb{Y}$ and operators $T\in \mathbb{L}(\mathbb{X},\mathbb{Y})$ such that both the cases of Theorem \ref{th-mt8} $(ii)$ hold.

\begin{example}
	(i) Suppose $\mathbb{X}$ is a two-dimensional Banach space such that the unit sphere of  $\mathbb{X}$ is a regular octagon with vertices $\pm (1,0),\pm (\frac{1}{\sqrt{2}},\frac{1}{\sqrt{2}}), \pm (0,1), \pm (-\frac{1}{\sqrt{2}},\frac{1}{\sqrt{2}}).$ Define $T\in \mathbb{L}(\mathbb{X},\mathbb{X})$ by $T(1,0)=(\frac{1}{2}+\frac{1}{2\sqrt{2}},\frac{1}{2\sqrt{2}}),$ $T(0,1)=(-\frac{1}{2\sqrt{2}},\frac{1}{2}+\frac{1}{2\sqrt{2}}).$ Then $M_T\cap Ext(B_{\mathbb{X}})=\{\pm (1,0),\pm (\frac{1}{\sqrt{2}},\frac{1}{\sqrt{2}}), \pm (0,1), \pm (-\frac{1}{\sqrt{2}},\frac{1}{\sqrt{2}})\}$ and $Tx$ is smooth for each $x\in M_T\cap Ext(B_{\mathbb{X}}).$ In this case, it can be verified that $T$  is $3-$smooth.\\
	(ii) Suppose that $\mathbb{X},\mathbb{Y}$ are two-dimensional Banach spaces such that $S_{\mathbb{X}}$ is a regular octagon with vertices $\pm (1,0),\pm (\frac{1}{\sqrt{2}},\frac{1}{\sqrt{2}}), \pm (0,1), \pm (-\frac{1}{\sqrt{2}},\frac{1}{\sqrt{2}})$ and $S_{\mathbb{Y}}$ is an irregular octagon with vertices $\pm (1,0),\pm \Big(\frac{17\sqrt{2}-30}{324-234\sqrt{2}},\frac{35\sqrt{2}-56}{324-234\sqrt{2}}\Big), \pm (0,1), \pm (-\frac{1}{\sqrt{2}},\frac{1}{\sqrt{2}}).$ Define $T\in \mathbb{L}(\mathbb{X},\mathbb{Y})$ by $T(1,0)=(\frac{5\sqrt{2}+4}{12},\frac{2+3\sqrt{2}}{12}),T(0,1)=(-\frac{\sqrt{2}}{4},\frac{2+\sqrt{2}}{4}).$ Then  $M_T\cap Ext(B_{\mathbb{X}})=\{\pm (1,0),\pm (\frac{1}{\sqrt{2}},\frac{1}{\sqrt{2}}), \pm (0,1), \pm (-\frac{1}{\sqrt{2}},\frac{1}{\sqrt{2}})\}$  and $Tx$ is smooth for each $x\in M_T\cap Ext(B_{\mathbb{X}}).$ In this case, it can be verified that $T$  is $4-$smooth.\\
\end{example}

In \cite[Th. 4.2]{W},  W\'{o}jcik proved that in an $n-$dimensional Banach space $\mathbb{X},$ if an unit vector $x\in \mathbb{X}$ is $n-$smooth, then $x$ is an exposed point. In the following theorem, we prove the converse of \cite[Th. 4.2]{W} for polyhedral Banach space.

\begin{theorem}
	Let $\mathbb{X}$ be an $n-$dimensional polyhedral Banach space. If $x\in S_{\mathbb{X}}$ is an exposed point of $\mathbb{X},$ then $x$ is $n-$smooth.
\end{theorem}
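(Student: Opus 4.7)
The plan is to exploit the polytope structure of $B_{\mathbb{X}^*}$ to produce $n$ linearly independent supporting functionals at $x$. Since $\mathbb{X}$ is $n$-dimensional and polyhedral, $B_{\mathbb{X}^*}$ is itself a polytope with finitely many extreme points, say $Ext(B_{\mathbb{X}^*})=\{f_1,\ldots,f_N\}$. A routine convexity and Hahn--Banach argument gives the inequality description
\[ B_\mathbb{X}=\{y\in\mathbb{X}:f_i(y)\leq 1 \text{ for every } 1\leq i\leq N\}, \]
and, since $J(y)$ is a face of $B_{\mathbb{X}^*}$ for each $y\in S_\mathbb{X}$, the identification $J(y)=conv\{f_i:f_i(y)=1\}$, so in particular $Ext~J(y)=\{f_i:f_i(y)=1\}$.

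The main technical step is to show that, when $x\in S_\mathbb{X}$ is exposed, the active set $A(x):=\{f_i:f_i(x)=1\}$ already spans $\mathbb{X}^*$. I would argue by contradiction: if $span~A(x)$ were a proper subspace of $\mathbb{X}^*$, then there would exist a nonzero $z\in\mathbb{X}$ annihilated by every $f_i\in A(x)$. Choosing $t>0$ small enough that the finitely many inactive constraints $f_j(x)<1$ still hold at $x\pm tz$, one obtains $x\pm tz\in B_\mathbb{X}$, and hence $x=\frac{1}{2}[(x+tz)+(x-tz)]$, contradicting the fact that every exposed point is necessarily extreme. This perturbation argument is where I expect the most care is needed.

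Once $span~A(x)=\mathbb{X}^*$ is established, we immediately get $dim~span~J(x)\geq dim~span~A(x)=n$, while the reverse inequality is trivial as $J(x)\subseteq\mathbb{X}^*$. Therefore $x$ is $n$-smooth. The other potential subtlety — verifying the two polytope facts cited in the first paragraph in the exact form needed for an arbitrary polyhedral Banach space — is standard finite-dimensional convex geometry, so I do not expect it to cause serious difficulty.
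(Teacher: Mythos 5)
Your proof is correct, and it takes a genuinely different route from the paper's. You argue on the dual side: since $B_{\mathbb{X}^*}$ is a polytope, $B_{\mathbb{X}}=\{y:f_i(y)\leq 1,\ 1\leq i\leq N\}$ with $\{f_1,\ldots,f_N\}=Ext(B_{\mathbb{X}^*})$, and if the active functionals at $x$ do not span $\mathbb{X}^*$ you perturb along a common annihilated direction $z$, using the finiteness of the inactive constraints (this is exactly where polyhedrality enters) to get $x\pm tz\in B_{\mathbb{X}}$, contradicting the extremality of $x$ (exposed $\Rightarrow$ extreme). The paper starts from the same kind of direction, $z\in\ker x_1^*\cap\ldots\cap\ker x_k^*$ for a basis of $span~Ext~J(x)$ with $k<n$, but instead of perturbing directly it passes to the two-dimensional section $Y=span\{x,z\}$, shows via Hahn--Banach extension of supporting functionals that $x$ is a smooth point of the polygonal ball $B_Y$, and then invokes the fact that a smooth point of a polygon is interior to an edge, hence $x=\frac{1}{2}x_1+\frac{1}{2}x_2$ with $x_1,x_2\in S_Y\subseteq S_{\mathbb{X}}$, again contradicting exposedness. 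Both proofs thus manufacture a nontrivial segment through $x$ inside $B_{\mathbb{X}}$; yours extracts it directly from the finite half-space description of $B_{\mathbb{X}}$, the paper's from the polygonal structure of a planar section. Your version is more self-contained and makes explicit that only extremality of $x$ is used (this is implicitly true of the paper's proof too, and costs nothing since extreme and exposed points coincide for polytopes); the identification $Ext~J(y)=\{f_i:f_i(y)=1\}$ in your first paragraph is not even needed, since $A(x)\subseteq J(x)$ already gives $dim~span~J(x)\geq n$. The only points to state carefully are that $\{f_1,\ldots,f_N\}$ lists \emph{all} extreme points of $B_{\mathbb{X}^*}$ (so the symmetric pairs $\pm f$), which gives $\|y\|=\max_i f_i(y)$ and hence your inequality description, and that each active $f_i$ has norm one so that it indeed lies in $J(x)$; both are routine.
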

\begin{proof}
	Suppose $x\in S_{\mathbb{X}}$ is an exposed point of $\mathbb{X}$ and $x$ is $k-$smooth. If possible, suppose that $k<n.$ Let $\{x_1^*,x_2^*,\ldots,x_k^*\}$ be linearly independent subset of $Ext~J(x).$ It is easy to see that $dim(\ker x_1^*\cap \ker x_2^*\cap\ldots\cap \ker x_k^*)=n-k>0.$ Suppose $z\in \cap_{i=1}^k\ker x_i^*.$ Let $Y=span\{x,z\}.$ Then $Y$ is a polygonal Banach space. If possible, suppose that $x$ is $2-$smooth in $Y.$ Then there exist linearly independent vectors $y_1^*,y_2^*\in S_{Y^*}$ such that $y_1^*(x)=y_2^*(x)=1.$ Let $z_1^*,z_2^*$ be two norm preserving extensions of $y_1^*$  and $y_2^*$ respectively. Then $z_1^*,z_2^*\in J(x).$ Thus, $z_1^*,z_2^*\in span ~J(x)=span~Ext~J(x).$ Since $x_i^*(z)=0$ for all $1\leq i\leq k,$ $z_1^*(z)=z_2^*(z)=0.$ Hence, $y_1^*(z)=y_2^*(z)=0,$ contradicting that $y_1^*,y_2^*$ are linearly independent. This proves that $x$ is smooth point in $Y.$ Hence, there exist $x_1,x_2\in S_Y\subseteq S_{\mathbb{X}}$ such that $x=\frac{1}{2}x_1+\frac{1}{2}x_2.$ Thus, $x$ is not an extreme point of $B_{\mathbb{X}}$ and so $x$ is not an exposed point of $B_{\mathbb{X}},$ contradicting the hypothesis of the theorem. Therefore, $k=n.$ This completes the proof of the theorem. 
\end{proof}

\bibliographystyle{amsplain}

\end{document}